\documentclass[11pt,twoside]{amsart}
\usepackage{graphicx}
\usepackage{color}
\usepackage{amssymb}
\usepackage{amsmath}
\usepackage{amsthm}
\usepackage{amsfonts}
\usepackage{amsmath, amsfonts, amssymb}
\newtheorem{theorem}{Theorem}[section]

\newtheorem{remark}[theorem]{Remark}

\numberwithin{equation}{section}

\begin{document} 
	\begin{center}\small{In the name of Allah, Most Gracious, Most Merciful.}\end{center}
	\vspace{0.5cm}
	
	\begin{center}\textbf{On the Classification of Two-Dimensional Algebras}\\ 
		\medskip 
		\textbf{Bekbaev U.}\\ 
		\textbf{$^1$ Turin Polytechnic University in Tashkent, Tashkent, Uzbekistan;}\\ \textbf{ uralbekbaev@gmail.com}\\
		
	\end{center} 
	
	Abstract. We provide a clarification of the classification of two-dimensional algebras over an arbitrary base field. In the finite field case, we compute the number of two-dimensional non-isomorphic algebras at least with one non-zero trace. \section{Introduction} The number of non-isomorphic two-dimensional algebras over a finite field $\mathbb{F}$ with $|\mathbb{F}|=q$ was computed in \cite{P} as
	\begin{center}
		$q^4+q^3+4q^2+4q+7$, if $char(\mathbb{F})\neq 2,3$,\\
		$q^4+q^3+4q^2+3q+6$, if $char(\mathbb{F})= 2$,\\
		$q^4+q^3+4q^2+4q+6$, if $char(\mathbb{F})= 3$.\end{center}
	For another approach to computing this number, see \cite{V}.
	
	Our original aim was to derive this number using the  classification result of \cite{B}. However, the discrepancy between the above formula and the number obtained from the classification result forced us to re-examine the classification. In that classification, all two-dimensional algebras were presented as a disjoint union of five invariant subsets, and the algebras in each subset were classified up to isomorphism. In the finite field case a difficulty arises in computing the number of non-isomorphic algebras in the fifth subset.
	
	In this paper we present a corrected version of classification of fifth subset algebras and then compute the number of two-dimensional algebras, at least with one non-zero trace, up to isomorphism over a finite field.
	
	The classification result presented in \cite{B} is as follows. 
	
	\begin{theorem} Any non-trivial two-dimensional algebra over a field $\mathbb{F}$ with $char(\mathbb{F})\neq 2,3$
		is isomorphic to exactly one of the following algebras, given by their matrices of structure constants. \begin{itemize}
			\item
			$ A_{1}(\mathbf{c})=\left(
			\begin{array}{cccc}
				\alpha_1 & \alpha_2 &1+\alpha_2 & \alpha_4 \\
				\beta_1 & -\alpha_1 & 1-\alpha_1 & -\alpha_2
			\end{array}\right),\ \mbox{where}\ \mathbf{c}=(\alpha_1, \alpha_2, \alpha_4, \beta_1) \in \mathbb{F}^4,$\\ 
			\item	$ A_{2}(\mathbf{c})=\left(
			\begin{array}{cccc}
				\alpha_1 & 0 & 0 & \alpha_4 \\
				1& \beta_2 & 1-\alpha_1&0
			\end{array}\right),\ \mbox{where}\ \mathbf{c}=(\alpha_1,\alpha_4, \beta_2)\in \mathbb{F}^3,\ \alpha_4\in \mathbb{F}^*,$ 
			\item	$ A_{3}(\mathbf{c})=\left(
			\begin{array}{cccc}
				\alpha_1 & 0 & 0 & \alpha_4 \\
				0& \beta_2 & 1-\alpha_1&0
			\end{array}\right)\simeq \left(
			\begin{array}{cccc}
				\alpha_1 & 0 & 0 & a^2\alpha_4 \\
				0& \beta_2 & 1-\alpha_1&0
			\end{array}\right),$ where $\mathbf{c}=(\alpha_1,\alpha_4, \beta_2)\in \mathbb{F}^3$ and $a\in \mathbb{F}^*$, \item	$ A_{4}(\mathbf{c})=\left(
			\begin{array}{cccc}
				0 & 1 & 1 & 0 \\
				\beta _1& \beta_2 & 1&-1
			\end{array}\right),\ \mbox{where}\ \mathbf{c}=(\beta_1, \beta_2)\in \mathbb{F}^2,$ 
			\item	$A_{5}(\mathbf{c})=\left(
			\begin{array}{cccc}
				\alpha _1 & 0 & 0 & 0 \\
				1 & 2\alpha _1-1 & 1-\alpha _1&0
			\end{array}\right),\ \mbox{where}\ \mathbf{c}=\alpha_1\in \mathbb{F},$
			\item	$ A_{6}(\mathbf{c})=\left(
			\begin{array}{cccc}
				\alpha_1 & 0 & 0 & \alpha_4 \\
				1& 1-\alpha_1 & -\alpha_1&0
			\end{array}\right),\ \mbox{where}\ \mathbf{c}=(\alpha_1,\alpha_4)\in \mathbb{F}^2,\ \alpha_4\in \mathbb{F}^*,$ 
			\item	$ A_{7}(\mathbf{c})=\left(
			\begin{array}{cccc}
				\alpha_1 & 0 & 0 & \alpha_4 \\
				0&1-\alpha_1 & -\alpha_1&0
			\end{array}\right)\simeq \left(
			\begin{array}{cccc}
				\alpha_1 & 0 & 0 & a^2\alpha_4 \\
				0& 1-\alpha_1 & -\alpha_1&0
			\end{array}\right),$ where $\mathbf{c}=(\alpha_1,\alpha_4)\in \mathbb{F}^2$ and $ a\in \mathbb{F}^*$,
			\item	$ A_{8}(\mathbf{c})=\left(
			\begin{array}{cccc}
				0 & 1 & 1 & 0 \\
				\beta _1& 1 & 0&-1
			\end{array}\right),\ \mbox{where}\ \mathbf{c}=\beta_1\in\mathbb{F},$
			\item	$ A_{9}(\mathbf{c})=\left(
			\begin{array}{cccc}
				\frac{1}{3} & 0 & 0 & 0 \\
				1 &\frac{2}{3} &-\frac{1}{3}&0
			\end{array}\right),$
			\item	$A_{10}(\mathbf{c})=\left(
			\begin{array}{cccc}
				0 &1 & 1 &1 \\
				\beta_1 &0 &0 &-1
			\end{array}
			\right)\simeq \left(
			\begin{array}{cccc}
				0 &1 & 1 &1 \\
				\beta'_1(a) &0 &0 &-1
			\end{array}\right),$ where polynomial $(\beta _1t^3-3t-1)(\beta_1t^2+\beta_1t+1)(\beta_1^2t^3+6\beta_1t^2+3\beta_1t+\beta_1-2)$ has no root in $\mathbb{F}$,  $ a\in \mathbb{F}$ and  $\beta' _1(t)=\frac{(\beta_1^2t^3+6\beta_1t^2+3\beta_1t+\beta_1-2)^2}{(\beta_1t^2+\beta_1t+1)^3}$,
			\item	$A_{11}(\mathbf{c})=\left(
			\begin{array}{cccc}
				0 &0 & 0 &1 \\
				\beta_1 &0 &0 &0
			\end{array}
			\right)\simeq \left(
			\begin{array}{cccc}
				0 &0 & 0 &1 \\
				a^3\beta_1^{\pm 1} &0 &0 &0
			\end{array}\right),$ where polynomial $\beta _1 -t^3$ has no root in $\mathbb{F}$, $\mathbf{c}=\beta_1\neq 0$ and $a\in \mathbb{F}^*$,
			\item	$A_{12}(\mathbf{c})=\left(
			\begin{array}{cccc}
				0 & 1 & 1 &0  \\
				\beta_1 &0& 0 &-1
			\end{array}
			\right)\simeq \left(
			\begin{array}{cccc}
				0 & 1 & 1 & 0 \\
				a^2\beta_1 &0& 0 &-1
			\end{array}
			\right),\ \mbox{where}\ \mathbf{c}=\beta_1\in \mathbb{F},\ a\in \mathbb{F}^*,$
			\item	$ A_{13}=\left(
			\begin{array}{cccc}
				0 & 0 & 0 & 0 \\
				1 &0&0 &0\end{array}\right).$ \end{itemize}
				Any non-trivial 2-dimensional algebra over a field $\mathbb{F}$, $char.(\mathbb{F})= 2$, is isomorphic to only one of the following listed, by their matrices of structure constants,  algebras. Moreover, their automorphism groups and derivations, in that basis, are as follows: \begin{itemize}
			\item
			$ A_{1,2}(\mathbf{c})=\left(
			\begin{array}{cccc}
				\alpha_1 & \alpha_2 &\alpha_2+1 & \alpha_4 \\
				\beta_1 & \alpha_1 & 1+\alpha_1 & \alpha_2
			\end{array}\right),\ \mbox{where}\ \mathbf{c}=(\alpha_1, \alpha_2, \alpha_4, \beta_1) \in \mathbb{F}^4,$\\ 
			\item	$ A_{2,2}(\mathbf{c})=\left(
			\begin{array}{cccc}
				\alpha_1 & 0 & 0 & \alpha_4 \\
				1& \beta_2 & 1+\alpha_1&0
			\end{array}\right),\ \mbox{where}\ \mathbf{c}=(\alpha_1,\alpha_4, \beta_2)\in \mathbb{F}^3,\ \alpha_4\in \mathbb{F}^*,$\\
			$A_{2,2}(\alpha_1,0,1)=\left(
			\begin{array}{cccc}
				\alpha_1 & 0 & 0 &0  \\
				1& 1 & 1+\alpha_1&0
			\end{array}\right),\ \mbox{where}\ \alpha_1\in\mathbb{F},  $\\ 
			\item	$ A_{3,2}(\mathbf{c})=\left(
			\begin{array}{cccc}
				\alpha_1 & 0 & 0 & \alpha_4 \\
				0& \beta_2 & 1+\alpha_1&0
			\end{array}\right)\simeq \left(
			\begin{array}{cccc}
				\alpha_1 & 0 & 0 & a^2\alpha_4 \\
				0& \beta_2 & 1+\alpha_1&0
			\end{array}\right),$ where $\mathbf{c}=(\alpha_1,\alpha_4, \beta_2)\in \mathbb{F}^3$ and $a\in \mathbb{F}^*$,\\ \item	$ A_{4,2}(\mathbf{c})=\left(
			\begin{array}{cccc}
				\alpha_1 & 1 & 1 & 0 \\
				\beta _1& \beta_2 & 1+\alpha_1&1
			\end{array}\right)\simeq \left(
			\begin{array}{cccc}
				\alpha_1 & 1 & 1 & 0 \\
				\beta _1+(1+\beta_2)a+a^2& \beta_2 & 1+\alpha_1&1
			\end{array}\right),\ \mbox{where}\ \mathbf{c}=(\alpha_1,\beta_1, \beta_2)\in \mathbb{F}^2,$\\ \item	$ A_{5,2}(\mathbf{c})=\left(
			\begin{array}{cccc}
				\alpha_1 & 0 & 0 & \alpha_4 \\
				1&1+\alpha_1 & \alpha_1&0
			\end{array}\right),\ \mbox{where}\ \mathbf{c}=(\alpha_1,\alpha_4)\in \mathbb{F}^2,\ \alpha_4\in \mathbb{F}^*,\\
			\ A_{5,2}(1,0)=\left(
			\begin{array}{cccc}
				1 & 0 & 0 & 0 \\
				1&0 & 1&0
			\end{array}\right),$\\ 
			\item	$ A_{6,2}(\mathbf{c})=\left(
			\begin{array}{cccc}
				\alpha_1 & 0 & 0 & \alpha_4 \\
				0&1+\alpha_1 & \alpha_1&0
			\end{array}\right)\simeq \left(
			\begin{array}{cccc}
				\alpha_1 & 0 & 0 & a^2\alpha_4 \\
				0& 1+\alpha_1 & \alpha_1&0
			\end{array}\right),$ where $\mathbf{c}=(\alpha_1,\alpha_4)\in \mathbb{F}^2$ and $ a\in \mathbb{F}^*$,\\
			\item	$ A_{7,2}(\mathbf{c})=\left(
			\begin{array}{cccc}
				\alpha_1 & 1 & 1 & 0 \\
				\beta _1& 1+\alpha_1 & \alpha_1&1
			\end{array}\right)\simeq \left(
			\begin{array}{cccc}
				\alpha_1 & 1 & 1 & 0 \\
				\beta _1+a\alpha_1+a+a^2& 1+\alpha_1 & \alpha_1&1
			\end{array}\right),\\ \mbox{where}\ \mathbf{c}=(\alpha_1,\beta_1)\in\mathbb{F}^2,\ a\in \mathbb{F},$\\
			\item	$A_{8,2}(\mathbf{c})=\left(
			\begin{array}{cccc}
				0 &1 & 1 &1 \\
				\beta_1 &0 &0 &1
			\end{array}
			\right)\simeq \left(
			\begin{array}{cccc}
				0 &1 & 1 &1 \\
				\beta'_1(a) &0 &0 &1
			\end{array}\right),$ where polynomial $(\beta _1t^3+t+1)(\beta _1t^2+\beta_1t+1)$ has no root in $\mathbb{F}$, $a\in \mathbb{F}$ and $\beta' _1(t)=\frac{(\beta_1^2t^3+\beta_1t+\beta_1)^2}{(\beta_1t^2+\beta_1t+1)^3}$, 
			\item	$A_{9,2}(\mathbf{c})=\left(
			\begin{array}{cccc}
				0 &0 & 0 &1\\
				\beta_1 &0 &0 &0
			\end{array}
			\right)\simeq \left(
			\begin{array}{cccc}
				0 &0 & 0 &1 \\
				a^3\beta_1^{\pm 1} &0 &0 &0
			\end{array}
			\right),\  \mbox{where}\ \mathbf{c}=\beta_1\in\mathbb{F},\\ a\in \mathbb{F}^*,$ polynomial $\beta_1+t^3$ has no root in $\mathbb{F}$,\\ 
			\item	$A_{10,2}(\mathbf{c})=\left(
			\begin{array}{cccc}
				1 & 1 & 1 & 0 \\
				\beta_1 &1& 1 &1
			\end{array}
			\right)\simeq \left(
			\begin{array}{cccc}
				1 & 1 & 1 & 0 \\
				\beta_1+a+a^2 &1& 1 &1
			\end{array}
			\right),\ \mbox{where}\ \mathbf{c}=\beta_1\in \mathbb{F},\ a\in \mathbb{F},$\\ 
			\item	$A_{11,2}=\left(
			\begin{array}{cccc}
				0 & 1 & 1 & 0 \\
				\beta_1 &0& 0 &1
			\end{array}
			\right)\simeq \left(
			\begin{array}{cccc}
				0 & 1 & 1 & 0 \\
				b^2(\beta_1+a^2) &0& 0 &1
			\end{array}
			\right),$ where $b\in \mathbb{F}^*$,\ $a\in \mathbb{F}$,\\ 
			\item  $ A_{12,2}=\left(
			\begin{array}{cccc}
				0 & 0 & 0 & 0 \\
				1 &0&0 &0\end{array}\right).$
			
			\end{itemize}\
			Any non-trivial 2-dimensional algebra over a field $\mathbb{F}$, $char.(\mathbb{F})=3$, is isomorphic to only one of the following listed, by their matrices of structure constants,  algebras. Moreover, their automorphism groups and derivations, in that basis, are as follows:
			\begin{itemize}
			\item
			$ A_{1,3}(\mathbf{c})=\left(
			\begin{array}{cccc}
				\alpha_1 & \alpha_2 &\alpha_2+1 & \alpha_4 \\
				\beta_1 & -\alpha_1 & 1-\alpha_1 & -\alpha_2
			\end{array}\right),\ \mbox{where}\ \mathbf{c}=(\alpha_1, \alpha_2, \alpha_4, \beta_1) \in \mathbb{F}^4,$\\ \item	$ A_{2,3}(\mathbf{c})=\left(
			\begin{array}{cccc}
				\alpha_1 & 0 & 0 & \alpha_4 \\
				1& \beta_2 & 1-\alpha_1&0
			\end{array}\right),\ \mbox{where}\ \mathbf{c}=(\alpha_1,\alpha_4, \beta_2)\in \mathbb{F}^3,\ \alpha_4\in \mathbb{F}^*\alpha_4\in \mathbb{F}^*\alpha_4\in \mathbb{F}^*,$\\ 
			\item	$ A_{3,3}(\mathbf{c})=\left(
			\begin{array}{cccc}
				\alpha_1 & 0 & 0 & \alpha_4 \\
				0& \beta_2 & 1-\alpha_1&0
			\end{array}\right)\simeq \left(
			\begin{array}{cccc}
				\alpha_1 & 0 & 0 & a^2\alpha_4 \\
				0& \beta_2 & 1-\alpha_1&0
			\end{array}\right),$\ \ where $\mathbf{c}=(\alpha_1,\alpha_4, \beta_2)\in \mathbb{F}^3$ and $a\in \mathbb{F}^*$,\\ 	\item	$ A_{4,3}(\mathbf{c})=\left(
			\begin{array}{cccc}
				0 & 1 & 1 & 0 \\
				\beta _1& \beta_2 & 1&-1
			\end{array}\right),\ \mbox{where}\ \mathbf{c}=(\beta_1, \beta_2)\in \mathbb{F}^2,$\ \\ \item	$ A_{5,3}(\mathbf{c})=\left(
			\begin{array}{cccc}
				\alpha _1 & 0 & 0 & 0 \\
				1 & 2\alpha _1-1 & 1-\alpha _1&0
			\end{array}\right),\ \mbox{where}\ \mathbf{c}=\alpha_1\in \mathbb{F},$\\
			\item	$ A_{6,3}(\mathbf{c})=\left(
			\begin{array}{cccc}
				\alpha_1 & 0 & 0 & \alpha_4 \\
				1& 1-\alpha_1 & -\alpha_1&0
			\end{array}\right),\ \mbox{where}\ \mathbf{c}=(\alpha_1,\alpha_4)\in \mathbb{F}^2,\ \alpha_4\in \mathbb{F}^*,$\\
			\item	$ A_{7,3}(\mathbf{c})=\left(
			\begin{array}{cccc}
				\alpha_1 & 0 & 0 & \alpha_4 \\
				0&1-\alpha_1 & -\alpha_1&0
			\end{array}\right)\simeq \left(
			\begin{array}{cccc}
				\alpha_1 & 0 & 0 & a^2\alpha_4 \\
				0& 1-\alpha_1 & -\alpha_1&0
			\end{array}\right),$ where $\mathbf{c}=(\alpha_1,\alpha_4)\in \mathbb{F}^2$ and $ a\in \mathbb{F}^*$,\ \\
			\item	$ A_{8,3}(\mathbf{c})=\left(
			\begin{array}{cccc}
				0 & 1 & 1 & 0 \\
				\beta _1& 1 & 0&-1
			\end{array}\right),\ \mbox{where}\ \mathbf{c}=\beta_1\in\mathbb{F},$\\ 
			\item	$ A_{9,3}(\beta_1)=\left(
			\begin{array}{cccc}
				0 &1 & 1 &1 \\
				\beta_1 &0 &0 &-1
			\end{array}
			\right)\simeq \left(
			\begin{array}{cccc}
				0 &1 & 1 &1 \\
				\beta'_1(a) &0 &0 &-1
			\end{array}\right),$ where polynomial\\ $(\beta _1-t^3 )(\beta _1t^2+\beta _1t+1)(\beta_1^2t^3+\beta_1-2)$ has no root in $\mathbb{F}$, $ a\in \mathbb{F}$ and $\beta' _1(t)=\frac{(\beta_1^2t^3+\beta_1-2)^2}{(\beta_1t^2+\beta_1t+1)^3}$.  		\\
			\item	$A_{10,3}(\beta_1)=\left(
			\begin{array}{cccc}
				0 &0 & 0 &1 \\
				\beta_1 &0 &0 &0
			\end{array}
			\right)\simeq \left(
			\begin{array}{cccc}
				0 &0 & 0 &1 \\
				a^3\beta_1^{\pm 1} &0 &0 &0
			\end{array}\right),$\ where polynomial\\ $\beta_1 -t^3$ has no root and $a\in \mathbb{F}^*$,\\ \item	$A_{11,3}(\beta_1)=\left(
			\begin{array}{cccc}
				0 & 1 & 1 &0  \\
				\beta_1 &0& 0 &-1
			\end{array}
			\right)\simeq \left(
			\begin{array}{cccc}
				0 & 1 & 1 & 0 \\
				a^2\beta_1 &0& 0 &-1
			\end{array}
			\right),\ \mbox{where}\ \beta_1\in \mathbb{F}, \ a\in \mathbb{F}^*,$\\ 
			\item	$A_{12,3}=\left(
			\begin{array}{cccc}
				1 & 0 & 0 & 0 \\
				1 &-1&-1 &0\end{array}\right),$\
			\item $A_{13,3}=\left(
			\begin{array}{cccc}
				0 & 0 & 0 & 0 \\
				1 &0&0 &0\end{array}\right).$ 
			
	\end{itemize} \end{theorem}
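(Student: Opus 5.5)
The approach follows the structure of \cite{B}. A two-dimensional algebra is encoded by its $2\times 4$ matrix of structure constants $A$ with respect to a basis $e_1,e_2$, where $e_ie_j$ is expressed through the columns. A change of basis $g\in GL(2,\mathbb{F})$ acts by $A\mapsto gA(g^{-1})^{\otimes 2}$. Classifying algebras up to isomorphism is then the same as classifying the orbits of this $GL(2,\mathbb{F})$-action on $\mathbb{F}^8$.

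The first step is to extract invariant data. The two traces $\mathrm{Tr}_1(A)=\alpha_1+\beta_2$ and $\mathrm{Tr}_2(A)=\alpha_2+\beta_4$ (the left and right trace forms) are linear forms on the algebra, and they transform as covectors under $g$. Using the cases
\begin{itemize}
\item $\mathrm{Tr}_1,\mathrm{Tr}_2$ linearly independent,
\item dependent but not both zero,
\item both zero,
\end{itemize}
one splits the space of structure matrices into the five invariant subsets of \cite{B}. On each subset I would use the covector to normalize the basis. If the covector pair is independent, the basis dual to it is canonical, so $g$ is forced to be the identity. This yields the four-parameter family $A_1$ with no residual identification. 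If the pair is dependent and nonzero, the common kernel and a suitable vector give a basis up to a torus or unipotent subgroup. Computing the residual stabilizer action on the remaining constants produces $A_2$--$A_9$. The identifications $\alpha_4\mapsto a^2\alpha_4$ come from the diagonal torus $\mathrm{diag}(1,a)$ acting on the surviving entries, and the distinction "$\beta_1=1$ versus $\beta_1=0$" comes from whether a unipotent element can kill that entry. In each case one checks, by substituting a general $g$ from the residual group, that exactly the listed equivalences remain. Characteristic $2$ and $3$ are handled by redoing the same normalizations. There, divisions by $2$ or $3$ fail, which produces the extra Artin--Schreier type parameters $a+a^2$ in characteristic $2$ and the special algebra $A_{12,3}$ in characteristic $3$.

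The main obstacle is the traceless subset ($\mathrm{Tr}_1=\mathrm{Tr}_2=0$), which yields $A_{10}$--$A_{13}$ and their analogues. Here there is no invariant covector, and the full $GL(2,\mathbb{F})$ acts. My approach is to associate to $A$ a binary cubic form, for instance $x\cdot(xx)$ projected appropriately. Its $GL_2$-orbits are governed by the factorization of the cubic over $\mathbb{F}$:
\begin{itemize}
\item If the cubic has a root in $\mathbb{F}$, one basis vector can be fixed as a rational point, which reduces to the forms $A_{12}$ or $A_{13}$.
\item If there is no rational root, the normal form $A_{10}$ or $A_{11}$ arises.
\end{itemize}
The residual identifications then come from the stabilizer in $GL_2$ of the normal form, and from moving the chosen rational-like parameter. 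Concretely, I would take $g=\begin{pmatrix}1&a\\0&1\end{pmatrix}$ combined with scalings and the swap, and compute the transformed $\beta_1$. The claim is that this gives exactly the rational map $\beta_1'(t)$, resp.\ $a^3\beta_1^{\pm1}$. The delicate part, and where I expect the correction to \cite{B} to lie, is proving that these maps exhaust all isomorphisms between two members of the family. To do this, I would compute the general condition $gA(\beta_1)=A(\beta_1')g^{\otimes2}$, and then eliminate variables to show that any solution has the indicated shape. The polynomial non-root conditions listed in the statement are exactly what prevents the algebra from degenerating into one of the earlier subsets.
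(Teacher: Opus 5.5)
Your reduction of the traceless subset to binary cubic forms is a different and more conceptual route than the paper's. The paper uses no invariants: it writes out $\alpha_1',\alpha_2',\alpha_4',\beta_1'$ for $gA(g^{-1})^{\otimes 2}$ and solves case by case ($\alpha_4\neq0$ or not, $\alpha_2\neq0$ or not, $\xi_2\eta_1=0$ or not, $2\xi+1=0$ or not). Note that the paper does not prove the statement as quoted. It keeps the first four subsets from the earlier work and re-derives only the fifth, where it records isomorphisms among $A_{10}$, $A_{11}$, $A_{12}$ (with enlarged parameter ranges) that any proof of ``exactly one'' must rule out.

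This is where your sketch has a genuine gap. The form $x\wedge x^2=\beta_1x^3-3\alpha_1x^2y-3\alpha_2xy^2-\alpha_4y^3$ equals $\beta_1x^3-3xy^2-y^3$ for $A_{10}(\beta_1)$; the factor $\beta_1t^3-3t-1$ is just its dehomogenization. For $A_{11}(\beta_1)$ it equals $\beta_1x^3-y^3$. Under the stated conditions both forms are irreducible, so your root/no-root dichotomy puts them in the same bin and gives nothing that separates them.

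Your reading of the non-root conditions is also wrong. Since $tr_i(A')=tr_i(A)g^{-1}$, nothing in the traceless subset can degenerate into an earlier subset. For $\beta_1\neq0$ (forced by irreducibility), the condition that $P_1(t)=\beta_1t^2+\beta_1t+1$ has no root says that $\beta_1(\beta_1-4)$ is a non-square. Equivalently, the discriminant $27\beta_1(4-\beta_1)$ of the form of $A_{10}(\beta_1)$ is not in the class $-3(\mathbb{F}^*)^2$ of the discriminants $-27\beta_1^2$ of the pure cubics $A_{11}$. The action is $f\mapsto(\det h)^{-1}f\circ h$, so the discriminant changes by $(\det h)^2$, and its square class is the invariant you are missing. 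This agrees with the paper: its criterion $A_{10}(\beta_1)\simeq A_{11}(\beta_1')$ iff $\beta_1=u+2+u^{-1}$ with $u=\beta_1't^3$ forces $\beta_1(\beta_1-4)=\bigl((u^2-1)/u\bigr)^2$. In its generic branch, $\alpha_2'=\eta_1P_1(\xi)/(2\xi+1)$ can vanish only at a root of $P_1$. In characteristic $2$ the analogous separation of $A_{8,2}$ from $A_{9,2}$ cannot use square classes at all and needs an Artin--Schreier substitute.

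Two further steps fail as stated. First, in characteristic $3$ your invariant collapses to $\beta_1x^3-\alpha_4y^3$ and forgets $\alpha_1,\alpha_2$. Then $A_{9,3}(\beta_1)$ and $A_{10,3}(\beta_1)$ have the same form. Also $A_{11,3}(\beta)$ with $\beta\neq0$, $A_{12,3}$ and $A_{13,3}$ all give multiples of $x^3$, and $A_{11,3}(0)$ gives the zero form. So ``redoing the same normalizations'' is not available; you must work with the full matrix, as the paper does in its separate characteristic-$3$ case.

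Second, the family $\beta_1'(a)$ does not come from a unipotent combined with scalings and the swap. Such matrices always have a zero entry, and those preserving the normal form $\alpha_1=0$, $\alpha_2=\alpha_4=1$ give only isolated values; the triangular ones give just $\beta_1$ and $4-\beta_1$. Write $g^{-1}=\begin{pmatrix}\xi_1&\eta_1\\ \xi_2&\eta_2\end{pmatrix}$ and $P_3(t)=\beta_1^2t^3+6\beta_1t^2+3\beta_1t+\beta_1-2$. The family comes from the general $g$ with $\xi_1/\xi_2=a$, $\eta_2/\eta_1=(\beta_1a^2-1)/(2a+1)$, $\eta_1=(2a+1)/P_1(a)$ and $\xi_2=P_3(a)/P_1(a)^2$. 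The exhaustiveness you call delicate is exactly the paper's case split over $\eta_1=0$, $\xi_2=0$, $2\xi+1=0$ and the generic case, which your proposal names but does not carry out.

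A minor point: the trace invariants are the covectors $tr_1=(\alpha_1+\beta_3,\alpha_2+\beta_4)$ and $tr_2=(\alpha_1+\beta_2,\alpha_3+\beta_4)$, not the scalars you wrote. For $A_1$ they are $(1,0)$ and $(0,1)$, which is what your dual-basis argument actually uses.
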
%

	\section{On the Classification of Two-Dimensional Algebras over an Arbitrary Base Field}
	In this section, we correct certain inaccuracies in the classification result presented in \cite{B}.
	
	Recall that a two-dimensional algebra given by the matrix
	\[A=\left(
	\begin{array}{cccc}
		\alpha _1 & \alpha _2 & \alpha _3 & \alpha _4 \\
		\beta _1 & \beta _2 & \beta _3 & \beta _4
	\end{array}
	\right)\] means the algebra with multiplication
	\[e_1e_1=\alpha _1e_1+\beta _1e_2, \ e_1e_2=\alpha _2e_1+\beta _2e_2,\ e_2e_1=\alpha _3e_1+\beta _3e_2, \ e_2e_2=\alpha _4e_1+\beta _4e_2.\] By the traces of $A$, we mean vectors  $tr_1(A)=(\alpha _1+\beta_3,\alpha _2+\beta_4), tr_2(A)=(\alpha _1+\beta_2,\alpha _3+\beta_4)$. Note that under  the change of basis $e'=eg^{-1}$, the corresponding matrices and traces transform according to $A'=gA(g^{-1}\otimes g^{-1})$ and $tr_i(A') =tr_i(A)g^{-1}$, respectively. Our corrections concern only the following fifth-subset algebras(non-trivial algebras with $tr_1(A)=tr_2(A)=0$) considered in that paper, and we follow the notation used there.
	
	{\bf The fifth subset in the case $char(\mathbb{F})\neq 2,3$ .} In this case
	\[A=\left(
	\begin{array}{cccc}
		\alpha _1 & \alpha _2 & \alpha _2 & \alpha _4 \\
		\beta _1 & -\alpha _1 & -\alpha _1 & -\alpha _2
	\end{array}
	\right),\ \mbox{where}\ g^{-1}=\left(\begin{array}{cc} \xi_1& \eta_1\\ \xi_2& \eta_2\end{array}\right),\ \Delta=\xi_1\eta_2-\xi_2\eta_1,\ \mbox{and}\]
	\[A'=\left(
	\begin{array}{cccc}
		\alpha' _1 & \alpha' _2 & \alpha' _2 & \alpha' _4 \\
		\beta' _1 & -\alpha' _1 & -\alpha' _1 & -\alpha' _2
	\end{array}
	\right)=gA(g^{-1})^{\otimes 2},\ \mbox{then we have}\]
	
	$\alpha' _1=\frac{1}{\Delta }\left(-\beta _1 \eta _1 \xi _1^2+\alpha _1 \eta _2 \xi _1^2+2 \alpha _1 \eta _1 \xi _1 \xi _2+2 \alpha _2 \eta _2 \xi _1 \xi _2+\alpha _2 \eta _1 \xi _2^2+\alpha _4 \eta _2 \xi _2^2\right),$
	
	$\alpha' _2=\frac{-1}{\Delta }\left(\beta _1 \eta _1^2 \xi _1-2 \alpha _1 \eta _1 \eta _2 \xi _1-\alpha _2 \eta _2^2 \xi _1-\alpha _1 \eta _1^2 \xi _2-2 \alpha _2 \eta _1 \eta _2 \xi _2-\alpha _4 \eta _2^2 \xi _2\right),$
	
	$\alpha' _4=\frac{-1}{\Delta }\left(\beta _1 \eta _1^3-3 \alpha _1 \eta _1^2 \eta _2-3 \alpha _2 \eta _1 \eta _2^2-\alpha _4 \eta _2^3\right),$
	
	$\beta' _1=\frac{1}{\Delta }\left(\beta _1 \xi _1^3-3 \alpha _1 \xi _1^2 \xi _2-3 \alpha _2 \xi _1 \xi _2^2-\alpha _4 \xi _2^3\right).$

	{\bf Case 1: $\alpha _4\neq 0$.} It follows that, if  $\xi _1=0$ and $\eta _2=-\frac{\alpha_2}{\alpha_4}$, then $\alpha' _1=0$. Therefore, it is sufficient to consider the case $\alpha_1=0$.
	
	{\bf Case 1-1: $\alpha _2\neq 0$.} If $\xi_2=\eta_1=0$, then
	$\alpha' _1=0,\	\alpha' _2=\alpha_2 \eta _2,\ \alpha'_4=\alpha_4\frac{\eta_2^2}{\xi_1},\ \beta' _1=\beta _1\frac{\xi_1^2}{\eta_2}$. Thus one can make $\alpha' _2=1,\ \alpha'_4=1$. Therefore, without loss of generality, we may assume that $\alpha_1=0,\alpha _2=1, \alpha_4=1$ and the above system becomes
	
	$\alpha' _1=\frac{1}{\Delta }\left(-\beta _1 \eta _1 \xi _1^2+2\eta _2 \xi _1 \xi _2+\eta _1 \xi _2^2+\eta _2 \xi _2^2\right),$
	
	$\alpha' _2=\frac{-1}{\Delta }\left(\beta _1 \eta _1^2 \xi _1-\eta _2^2 \xi _1-2 \eta _1 \eta _2 \xi _2-\eta _2^2 \xi _2\right),$
	
	$\alpha' _4=\frac{-1}{\Delta }\left(\beta _1 \eta _1^3-3\eta _1 \eta _2^2-\eta _2^3\right),$
	
	$\beta' _1=\frac{1}{\Delta }\left(\beta _1 \xi _1^3-3\xi _1 \xi _2^2- \xi _2^3\right).$
	
	In the case $\xi_2\eta_1=0$, this yields the algebra $\left(
	\begin{array}{cccc}
		0 &1 & 1 &1 \\
		\beta_1 &0 &0 &-1
	\end{array}
	\right)\simeq\left(
	\begin{array}{cccc}
		0 &1 & 1 &1 \\
		4-\beta_1 &0 &0 &-1
	\end{array}
	\right)$. Indeed, if $\eta_1=0$, then $\Delta= \xi_1\eta_2$ and
	
	$\alpha' _1=\xi _1\left(2 \xi _2/\xi _1+(\xi _2/\xi _1)^2\right)$ implies $\xi _2=-2\xi _1,$
	
	$\alpha' _2=\eta_2\left(1+\xi _2/\xi _1\right)=-\eta_2\xi _1=1,$
	
	$\alpha' _4=\frac{1}{\xi_1}\left(\eta _2^2\right)=\frac{1}{\xi_1^3}=1.$ So, if $\xi_1=a, \xi_2=-2a$ and $\eta_2=-1/a$, where $a^3=1$, then 
	
	$\beta' _1=\frac{1}{-1}(\beta _1a^3-12a^3+8a^3)=4-\beta _1.$
	
	In the case $\xi_2=0$, no new algebra appears.
	
	In the $\xi_2\eta_1\neq 0$ case $\alpha' _1=0$ is equivalent to $\frac{ \eta _2}{\eta_1}(2\frac {\xi _1}{\xi_2}+1)-\beta _1 (\frac {\xi _1}{\xi_2})^2+
	1=0$. If $2\frac{\xi _1}{\xi_2}+1=0 $, then this yields 
	$\left(
	\begin{array}{cccc}
		0 &1 & 1 &1 \\
		4 &0 &0 &-1
	\end{array}
	\right)\simeq\left(
	\begin{array}{cccc}
		0 &1 & 1 &1 \\
		0 &0 &0 &-1
	\end{array}
	\right)$.
	
	If $2\frac{\xi _1}{\xi_2}+1\neq 0$, then $\frac{ \eta _2}{\eta_1}=\frac{\beta _1 (\frac {\xi _1}{\xi_2})^2-1}{2\frac {\xi _1}{\xi_2}+1}=\frac{\beta _1 \xi^2-1}{2\xi+1}$, where $\xi=\frac {\xi _1}{\xi_2}$, and
	\[\alpha' _2=\frac{\eta _1^2\xi _2}{\Delta }\left(-\beta _1 \frac {\xi _1}{\xi_2}+2\frac{ \eta _2}{\eta_1} +(1+\frac {\xi _1}{\xi_2})(\frac{ \eta _2}{\eta_1})^2 \right)=\]\[\frac{\eta _1^2\xi _2}{\Delta }\left(-\beta _1 \xi+2\frac{\beta _1 \xi^2-1}{2\xi+1} +(1+\xi)(\frac{\beta _1 \xi^2-1}{2\xi+1})^2 \right)=\]
	\[\frac{\eta _1^2\xi _2}{\Delta }\frac{\beta _1^2\xi^5+\beta _1^2\xi^4-2\beta _1\xi^3-4\beta _1\xi^2-(3 +\beta_1)\xi-1}{(2\xi+1)^2}=\]
	\[\frac{\eta _1^2\xi _2}{\Delta }\frac{(\beta _1\xi^3-3\xi-1)(\beta _1\xi^2+\beta _1\xi+1)}{(2\xi+1)^2}.\]
	\[\beta'_1=\frac{\xi_2^3}{\Delta}(\beta _1\xi^3-3\xi-1)\]
	\[\alpha'_4=-\frac{\eta_1^3}{\Delta}(\beta _1 -3(\frac{\eta _2}{\eta _1})^2- (\frac{\eta _2}{\eta _1})^3)=\frac{\eta_1^3}{(2\xi+1)^3\Delta}(\beta _1^3\xi^6 +6\beta_1^2\xi^5-20\beta_1\xi^3-15\beta_1\xi^2+6(1-\beta_1)\xi+2-\beta_1)\]
	Note that the following equalities hold \[\Delta=\xi_2\eta_1(\xi\frac{\beta _1 \xi^2-1}{2\xi+1}-1)=\xi_2\eta_1\frac{\beta _1 \xi^3-3\xi-1}{2\xi+1}, \]\[P(t)=\beta _1^3t^6 +6\beta_1^2t^5-20\beta_1t^3-15\beta_1t^2+6(1-\beta_1)t+2-\beta_1=(\beta_1t^3-3t-1)(\beta_1^2t^3+6\beta_1t^2+3\beta_1t+\beta_1-2).\]
	
	To make $\alpha'_2=\alpha'_4=1$, there must exist $\xi_0\neq -1/2$ such that $P_1(\xi_0)P_2(\xi_0)P_3(\xi_0)\neq 0$, where
	$P_1(t)=\beta _1t^2+\beta _1t+1, P_2(t)=\beta _1t^3-3t-1, P_3(t)=\beta_1^2t^3+6\beta_1t^2+3\beta_1t+\beta_1-2$.
	
	In this case 
	\[\alpha' _2=\frac{\eta _1^2\xi _2}{\Delta }\frac{(\beta _1\xi_0^3-3\xi_0-1)(\beta _1\xi_0^2+\beta _1\xi_0+1)}{(2\xi_0+1)^2}=\eta _1\frac{\beta _1\xi_0^2+\beta _1\xi_0+1}{2\xi_0+1}.\]
	
	Therefore if $\eta _1=\frac{2\xi_0+1}{\beta _1\xi_0^2+\beta _1\xi_0+1}$, then $\alpha' _2=1$.
	
	Equality  $\alpha'_4=\frac{\eta_1^3}{(2\xi_0+1)^3\Delta}P(\xi_0)=1$ is equivalent to   
	$\xi_2=\frac{\beta_1^2\xi_0^3+6\beta_1\xi_0^2+3\beta_1\xi_0+\beta_1-2}{(\beta_1\xi_0^2+\beta_1\xi_0+1)^2}$.
	
	In this case 
	\[\beta' _1=\frac{\xi_2^3}{\Delta }(\beta _1 \xi_0^3-3\xi_0 -1)=\frac{(2\xi_0+1)\xi_2^2}{\eta_1}=\frac{(\beta_1^2\xi_0^3+6\beta_1\xi_0^2+3\beta_1\xi_0+\beta_1-2)^2}{(\beta_1\xi_0^2+\beta_1\xi_0+1)^3}\] and hence we obtain the matrix of structure constants (MSC)
	\[\left(\begin{array}{cccc}
		0 &1 & 1 &1 \\
		4-\beta_1 &0 &0 &-1
	\end{array}
	\right)\simeq A_{10}(\mathbf{c})=\left(
	\begin{array}{cccc}
		0 &1 & 1 &1 \\
		\beta_1 &0 &0 &-1
	\end{array}
	\right)\simeq \left(
	\begin{array}{cccc}
		0 &1 & 1 &1 \\
		\beta'_1(a) &0 &0 &-1
	\end{array}\right),\] where  $\frac{-1}{2}\neq a\in \mathbb{F}$, $(\beta _1a^3-3a-1)(\beta _1a^2+\beta_1a+1)(\beta_1^2a^3+6\beta_1a^2+3\beta_1a+\beta_1-2)\neq 0$,  and   \[\beta' _1(t)=\frac{(\beta_1^2t^3+6\beta_1t^2+3\beta_1t+\beta_1-2)^2}{(\beta_1t^2+\beta_1t+1)^3}.\] It follows that $\beta' _1(\frac{-1}{2})=4-\beta_1$.
	
	{\bf Case 1-2: $\alpha _2= 0$.} The system becomes\\
	$\begin{array}{ll} \alpha' _1=\frac{1}{\Delta }\left(-\beta _1 \eta _1 \xi _1^2+\alpha _4 \eta _2 \xi _2^2\right),&
		\alpha' _2=\frac{-1}{\Delta }\left(\beta _1 \eta _1^2 \xi _1-\alpha _4 \eta _2^2 \xi _2\right),\\
		\alpha' _4=\frac{-1}{\Delta }\left(\beta _1 \eta _1^3-\alpha _4 \eta _2^3\right),&\beta' _1=\frac{1}{\Delta }\left(\beta _1 \xi _1^3-\alpha _4 \xi _2^3\right)\end{array}.$ \\ To make $\alpha' _1=\alpha' _2=0$, we must have
	$\xi_1=\eta_2=0$ or $\xi_2=\eta_1=0$. If $\xi_2=\eta_1=0$ and $\xi_1=\alpha _4 \eta _2^2$, then $\alpha' _4=\frac{-1}{\Delta }\left(\beta _1 \eta _1^3-\alpha _4 \eta _2^3\right)=\frac{-1}{\xi_1}(-\alpha _4 \eta _2^2)=1$ and $\beta' _1=\frac{1}{\Delta }\left(\beta _1 \xi _1^3-\alpha _4 \xi _2^3\right)=\frac{\xi _1^2}{\eta _2}\beta _1= \beta _1\alpha _4^2\eta _2^3$ and hence it yields the algebra
	\[A_{11}(\mathbf{c})=\left(
	\begin{array}{cccc}
		0 &0 & 0 &1 \\
		\beta_1 &0 &0 &0
	\end{array}
	\right)\simeq \left(
	\begin{array}{cccc}
		0 &0 & 0 &1 \\
		a^3\beta_1 &0 &0 &0
	\end{array}\right),\] where  $\mathbf{c}=\beta_1\in \mathbb{F}$ and $0\neq a\in \mathbb{F}$. 
	
	{\bf Case 2: $\alpha _4= 0$.} The system becomes
	
	$\alpha' _1=\frac{1}{\Delta }\left(-\beta _1 \eta _1 \xi _1^2+\alpha _1 \eta _2 \xi _1^2+2 \alpha _1 \eta _1 \xi _1 \xi _2+2 \alpha _2 \eta _2 \xi _1 \xi _2+\alpha _2 \eta _1 \xi _2^2\right),$
	
	$\alpha' _2=\frac{-1}{\Delta }\left(\beta _1 \eta _1^2 \xi _1-2 \alpha _1 \eta _1 \eta _2 \xi _1-\alpha _2 \eta _2^2 \xi _1-\alpha _1 \eta _1^2 \xi _2-2 \alpha _2 \eta _1 \eta _2 \xi _2\right),$
	
	$\alpha' _4=\frac{-\eta _1}{\Delta }\left(\beta _1 \eta _1^2-3 \alpha _1 \eta _1 \eta _2-3 \alpha _2 \eta _2^2\right),$
	
	$\beta' _1=\frac{\xi _1}{\Delta }\left(\beta _1 \xi _1^2-3 \alpha _1 \xi _1 \xi _2-3 \alpha _2\xi _2^2\right).$
	
	If $\eta _1 = 0$, then $\Delta=\xi_1\eta_2$, $\alpha'_4=0$, and 
	
	$\alpha' _1=\xi _1\left(\alpha _1+2\alpha _2\frac{\xi_2}{\xi_1}\right),$\
	
	$\alpha' _2=\alpha _2\eta_2,$\
	
	$\beta' _1=\frac{ \xi _1^2}{\eta_2 }\left(\beta _1-3 \alpha _1 \frac{\xi_2}{\xi_1}-3 \alpha _2(\frac{\xi_2}{\xi_1})^2\right).$
	
	{\bf Case 2-1: $\alpha_2\neq 0$.} If $\frac{\xi_2}{\xi_1}=\frac{-\alpha _1}{2\alpha _2}$, then  $\alpha' _1=0$ $\alpha' _2\neq 0$. Then $\alpha' _2$ can be made equal to $1$, yielding\\
	$A_{12}(\mathbf{c})=
	\begin{pmatrix}
		0 & 1 & 1 & 0 \\
		\beta_1 &0& 0 &-1
	\end{pmatrix}
	\simeq \left(
	\begin{array}{cccc}
		0 & 1 & 1 & 0 \\
		a^2\beta_1 &0& 0 &-1
	\end{array}
	\right),$ where $\mathbf{c}=\beta_1\in \mathbb{F}$ and $0\neq a\in \mathbb{F}$.

	{\bf Case 2-2: $\alpha_2= 0$.} The corresponding system is
	
	$\alpha' _1=\frac{\xi _1}{\Delta }\left(-\beta _1 \eta _1 \xi _1+\alpha _1 \eta _2 \xi _1+2 \alpha _1 \eta _1 \xi _2\right),$
	
	$\alpha' _2=\frac{-\eta _1}{\Delta }\left(\beta _1 \eta _1 \xi _1-2 \alpha _1  \eta _2 \xi _1-\alpha _1 \eta _1 \xi _2\right),$
	
	$\alpha' _4=\frac{-\eta _1^2}{\Delta }\left(\beta _1 \eta _1-3 \alpha _1 \eta _2\right),$
	
	$\beta' _1=\frac{\xi _1^2}{\Delta }\left(\beta _1 \xi _1-3 \alpha _1 \xi _2\right).$
	
	During the re-examination, we found the following isomorphisms among the above classes of algebras:\\
	$A_{11}(\beta_1)\simeq A_{11}(\beta_1^2),$ since $gA_{11}(\beta_1)(g^{-1}\otimes g^{-1})=A_{11}(\beta_1^2)$, where  $g=\begin{pmatrix}0 & 1/\beta_1\\
		1&0\end{pmatrix}, \beta_1\neq 0$.\\
	Algebras $A_{10}(\beta_1)$ and $A_{11}(\beta'_1)$ are isomorphic if and only if there exists $t\in F^*$ such that $\beta_1=\beta'_1t^3+2+1/(\beta'_1t^3)$ and $(\beta'_1)^2t^6\neq 1$. In this case $gA_{10}(\beta_1)(g^{-1}\otimes g^{-1})=A_{11}(\beta'_1)$, where  $g=\begin{pmatrix}t^2+1/(\beta'_1t) & 1/(\beta'_1t)\\
		t+1/(\beta'_1t^2)&t\end{pmatrix}$.\\
	Algebras $A_{10}(\beta_1)$ and $A_{12}(\beta'_1)$ are isomorphic  if and only if there exists $t\neq \pm1/2, s\neq 0$ such that $\beta_1=2(2t+1)^2(1-t), s^2\beta'_1=1-t^2$. In this case $gA_{10}(\beta_1)(g^{-1}\otimes g^{-1})=A_{12}(\beta'_1)$, where  $g=\begin{pmatrix}
		(2t+1)s & s\\
		(2t+1)(1-t)&t\end{pmatrix}$.\\
	Algebras $A_{11}(\beta_1)$ and $A_{12}(\beta'_1)$ are isomorphic  if and only if there exists $s,t\in F^*$ such that $\beta_1=-8t^3, s^2\beta'_1=-t^2$. In this case $gA_{11}(\beta_1)(g^{-1}\otimes g^{-1})=A_{12}(\beta'_1)$, where  $g=\begin{pmatrix}
		2ts & s\\
		-2t^2&t\end{pmatrix}$.\\
	
		{\bf The fifth subset in the  case $char(\mathbb{F})=2$.}	
	
	The same approach applies.

	{\bf Case 1: $\alpha _4\neq 0$.} If  $\xi _1=0$ and $\eta _2=-\frac{\alpha_2}{\alpha_4}$, then $\alpha' _1=0$. Therefore, we only consider the case $\alpha_1=0$. 
	
	{\bf Case 1.1:  $\alpha _2\neq  0$.}  If $\xi_2=\eta_1=0$, then
	$\alpha' _1=0,\	\alpha' _2=\alpha_2 \eta _2,\ \alpha'_4=\alpha_4\frac{\eta_2^2}{\xi_1},\ \beta' _1=\beta _1\frac{\xi_1^2}{\eta_2}$. Therefore, without loss of generality, we may assume that $\alpha_1=0,\	\alpha _2=1,\ \alpha_4=1$. The corresponding system becomes:
	
	$\alpha' _1=\frac{1}{\Delta }\left(-\beta _1 \eta _1 \xi _1^2+\eta _1 \xi _2^2+\eta _2 \xi _2^2\right),$
	
	$\alpha' _2=\frac{-1}{\Delta }\left(\beta _1 \eta _1^2 \xi _1-\eta _2^2 \xi _1-\eta _2^2 \xi _2\right),$
	
	$\alpha' _4=\frac{-1}{\Delta }\left(\beta _1 \eta _1^3-\eta _1 \eta _2^2-\eta _2^3\right),$
	
	$\beta' _1=\frac{1}{\Delta }\left(\beta _1 \xi _1^3-\xi _1 \xi _2^2- \xi _2^3\right).$
	
	If $\xi_2=\eta_1=0$, then $\Delta= \xi_1\eta_2$ and		
	$\alpha' _1=0,$
	$\alpha' _2=\eta_2\xi _1,$
	$\alpha' _4=\frac{\eta _2^2}{\xi_1}$,  
	$\beta' _1=\frac{\xi _1^2}{\eta _2}\beta_1.$ Setting $\eta_2=1$ and $\xi _1=1$, we obtain  $\left(
	\begin{array}{cccc}
		0 &1 & 1 &1 \\
		\beta_1 &0 &0 &-1
	\end{array}
	\right)\simeq\left(
	\begin{array}{cccc}
		0 &1 & 1 &1 \\
		\beta_1 &0 &0 &-1
	\end{array}
	\right)$.
	
	In the case $\xi_2\eta_1\neq 0$, we have $\alpha' _1=0$ if and only if $\frac{ \eta _2}{\eta_1}=\beta _1 \xi^2-1$, where $\xi=\frac {\xi _1}{\xi_2}$. In this case 
	\[\alpha' _2=\frac{\eta _1^2\xi _2}{\Delta }\left(-\beta _1 \frac {\xi _1}{\xi_2}+(1+\frac {\xi _1}{\xi_2})(\frac{ \eta _2}{\eta_1})^2 \right)=\]\[\frac{\eta _1^2\xi _2}{\Delta }\left(-\beta _1 \xi +(1+\xi)(\beta _1 \xi^2-1)^2 \right)=\]
	\[\frac{\eta _1^2\xi _2}{\Delta }(\beta _1^2\xi^5+\beta _1^2\xi^4-(3 +\beta_1)\xi-1)=\]
	\[\frac{\eta _1^2\xi _2}{\Delta }(\beta _1\xi^3-\xi-1)(\beta _1\xi^2+\beta _1\xi+1).\]
	\[\beta'_1=\frac{\xi_2^3}{\Delta}(\beta _1\xi^3-\xi-1)\]
	\[\alpha'_4=-\frac{\eta_1^3}{\Delta}(\beta _1 -(\frac{\eta _2}{\eta _1})^2- (\frac{\eta _2}{\eta _1})^3)=\frac{\eta_1^3}{\Delta}(\beta _1^3\xi^6 -\beta_1\xi^2-\beta_1)\]
	Note that the following equalities hold: \[\Delta=\xi_2\eta_1(\xi(\beta _1 \xi^2-1)-1)=\xi_2\eta_1(\beta _1 \xi^3-\xi-1), \]\[P(t)=\beta _1^3t^6 -\beta_1t^2-\beta_1=(\beta_1t^3+t+1)^2\beta_1.\]
	
	To make $\alpha'_2=\alpha'_4=1$, we must have $\beta_1\neq 0$,  and there must exist $\xi_0$ such that $P_1(\xi_0)P_2(\xi_0)\neq 0$, where
	$P_1(t)=\beta _1t^2+\beta _1t+1, P_2(t)=\beta _1t^3+t+1$.
	
	In this case 
	\[\alpha' _2=\frac{\eta _1^2\xi _2}{\Delta }(\beta _1\xi_0^3-\xi_0-1)(\beta _1\xi_0^2+\beta _1\xi_0+1)=\eta _1(\beta _1\xi_0^2+\beta _1\xi_0+1).\]
	
	Therefore, if $\eta _1=\frac{1}{\beta _1\xi_0^2+\beta _1\xi_0+1}$, then $\alpha' _2=1$.
	
	Equality  $\alpha'_4=\frac{\eta_1^3}{\Delta}P(\xi_0)=1$ is equivalent to   
	$\xi_2=\beta_1\frac{\beta_1\xi_0^3+\xi_0+1}{(\beta_1\xi_0^2+\beta_1\xi_0+1)^2}$.
	
	In this case 
	\[\beta' _1=\frac{\xi_2^3}{\Delta }(\beta _1 \xi_0^3-\xi_0 -1)=\beta_1^2\frac{(\beta_1\xi_0^3+\xi_0+1)^2}{(\beta_1\xi_0^2+\beta_1\xi_0+1)^3}\] and this yields the MSC
	\[ A_{8.2}(\mathbf{c})=\left(
	\begin{array}{cccc}
		0 &1 & 1 &1 \\
		\beta_1 &0 &0 &-1
	\end{array}
	\right)\simeq \left(
	\begin{array}{cccc}
		0 &1 & 1 &1 \\
		\beta'_1(a) &0 &0 &-1
	\end{array}\right),\] where  $(\beta _1a^3+a+1)(\beta _1a^2+\beta_1a+1)\beta_1\neq 0$  and   \[\beta' _1(t)=\beta_1^2\frac{(\beta_1t^3+t+1)^2}{(\beta_1t^2+\beta_1t+1)^3}.\] 
	
	{\bf Case 1.2: $\alpha _2= 0$.} The system becomes\\
	$\begin{array}{ll} \alpha' _1=\frac{1}{\Delta }\left(-\beta _1 \eta _1 \xi _1^2+\alpha _4 \eta _2 \xi _2^2\right),&
		\alpha' _2=\frac{-1}{\Delta }\left(\beta _1 \eta _1^2 \xi _1-\alpha _4 \eta _2^2 \xi _2\right),\\
		\alpha' _4=\frac{-1}{\Delta }\left(\beta _1 \eta _1^3-\alpha _4 \eta _2^3\right),&\beta' _1=\frac{1}{\Delta }\left(\beta _1 \xi _1^3-\alpha _4 \xi _2^3\right)\end{array}$ \\ If $\xi_2=\eta_1=0$, then $\alpha' _4=\frac{-1}{\Delta }\left(\beta _1 \eta _1^3-\alpha _4 \eta _2^3\right)=\frac{\alpha _4 \eta _2^2}{\xi_1}$ and  $\beta' _1=\frac{1}{\Delta }\left(\beta _1 \xi _1^3-\alpha _4 \xi _2^3\right)=\frac{\xi _1^2}{\eta _2}\beta _1= \beta _1\alpha _4^2\eta _2^3$. Therefore one obtains
	\[A_{9.2}(\mathbf{c})=\left(
	\begin{array}{cccc}
		0 &0 & 0 &1 \\
		\beta_1 &0 &0 &0
	\end{array}
	\right)\simeq \left(
	\begin{array}{cccc}
		0 &0 & 0 &1 \\
		a^3\beta_1 &0 &0 &0
	\end{array}\right),\] where  $\mathbf{c}=\beta_1\in \mathbb{F}$ and $0\neq a\in \mathbb{F}$. 
	
	{\bf Case 2: $\alpha _4= 0$.} The system becomes
	
	$\alpha' _1=\frac{1}{\Delta }\left(-\beta _1 \eta _1 \xi _1^2+\alpha _1 \eta _2 \xi _1^2+\alpha _2 \eta _1 \xi _2^2\right),$
	
	$\alpha' _2=\frac{-1}{\Delta }\left(\beta _1 \eta _1^2 \xi _1-\alpha _2 \eta _2^2 \xi _1-\alpha _1 \eta _1^2 \xi _2\right),$
	
	$\alpha' _4=\frac{-\eta _1}{\Delta }\left(\beta _1 \eta _1^2- \alpha _1 \eta _1 \eta _2- \alpha _2 \eta _2^2\right),$
	
	$\beta' _1=\frac{\xi _1}{\Delta }\left(\beta _1 \xi _1^2- \alpha _1 \xi _1 \xi _2- \alpha _2\xi _2^2\right).$
	
	If $\eta _1 = 0$, then $\alpha'_4=0$, $\Delta=\xi_1\eta_2$, and
	
	$\alpha' _1=\xi _1\alpha _1,$\
	
	$\alpha' _2=\alpha _2\eta_2,$\
	
	$\beta' _1=\frac{ \xi _1^2}{\eta_2 }\left(\beta _1- \alpha _1 \frac{\xi_2}{\xi_1}- \alpha _2(\frac{\xi_2}{\xi_1})^2\right).$
	
	{\bf Case 2-1: $\alpha_1\neq 0, \alpha_2\neq 0$.} We may assume  $\alpha' _1=1$ and $\alpha' _2= 1$, obtaining\\
	$A_{10,2}(\mathbf{c})=\left(
	\begin{array}{cccc}
		1 & 1 & 1 & 0 \\
		\beta_1 &1& 1 &1
	\end{array}
	\right)\simeq \left(
	\begin{array}{cccc}
		1 & 1 & 1 & 0 \\
		a^2+a+\beta_1 &1& 1 &1
	\end{array}
	\right),$ where $\mathbf{c}=\beta_1\in \mathbb{F}$ and $a\in \mathbb{F}$.

	{\bf Case 2-2: $\alpha_1=0, \alpha_2\neq 0$.} In this case, we obtain\\
	$A_{11,2}(\mathbf{c})=\left(
	\begin{array}{cccc}
		0 & 1 & 1 & 0 \\
		\beta_1 &0& 0 &1
	\end{array}
	\right)\simeq \left(
	\begin{array}{cccc}
		0 & 1 & 1 & 0 \\
		b^2(\beta_1+a^2) &0&0 &1
	\end{array}
	\right),$ where $\mathbf{c}=\beta_1\in \mathbb{F}$ and $a,b\in \mathbb{F}, b\neq 0$.
	
	{\bf Case 2-3: $\alpha_1\neq 0, \alpha_2= 0$.}	This yields the algebra\\
	$A'=\left(
	\begin{array}{cccc}
		1 & 0 & 0 & 0 \\
		0 &-1& -1 &0
	\end{array}
	\right)$, which is isomorphic to $A_{11,2}(0)$.
	
	{\bf Case 2-4: $\alpha_1= 0, \alpha_2= 0$.} In this case one has
	$A=\left(
	\begin{array}{cccc}
		0 & 0 & 0 & 0 \\
		1 &0&0 &0\end{array}\right)\simeq A_{9,2}(0).$
	
	In the case $char(\mathbb{F})=2$, among the above class of algebras there are the following isomorphisms:\\
	Algebras $A_{8,2}(\beta_1)$ and $A_{9,2}(\beta'_1)$ are isomorphic if and only if there exists $t\in F^*$ such that $\beta_1=\beta'_1t^3+1/(\beta'_1t^3)$ and $(\beta'_1)^2t^6\neq 1$. In this case $gA_{8,2}(\beta_1)(g^{-1}\otimes g^{-1})=A_{9,2}(\beta'_1)$, where  $g=\begin{pmatrix}t^2+1/(\beta'_1t) & 1/(\beta'_1t)\\
		t+1/(\beta'_1t^2)&t\end{pmatrix}$.\\
	Algebras $A_{8,2}(\beta_1)$ and $A_{10,2}(\beta'_1)$ are isomorphic if and only if there exists $s\in F^*\setminus \{1\}$ such that $\beta_1=s^3+s$ and $\beta'_1=(1+s)/s$. In this case $gA_{8,2}(\beta_1)(g^{-1}\otimes g^{-1})=A_{10,2}(\beta'_1)$, where $g=\begin{pmatrix}s^2+s& s\\
		s^2+s&1\end{pmatrix}$.\\	
	Algebras $A_{8,2}(\beta_1)$ and $A_{11,2}(\beta'_1)$ are isomorphic if and only if  $\beta_1=0=\beta'_1$. In this case $gA_{8.2}(\beta_1)(g^{-1}\otimes g^{-1})=A_{11,2}(\beta'_1)$, where  $g=\begin{pmatrix}1& 1\\
		0&1\end{pmatrix}$.\\
	Algebras $A_{9,2}(\beta_1)$ and $A_{10,2}(\beta'_1)$ are isomorphic if and only if there exists $s\neq 0,t\in F$ such that $\beta_1=s^3$, $t^2+st+(\beta'_1+1)s^2=0$. In this case $gA_{9,2}(\beta_1)(g^{-1}\otimes g^{-1})=A_{10,2}(\beta'_1)$, where  $g=\begin{pmatrix}s^2&s\\
		s^2+st&t\end{pmatrix}$.\\
	Algebras $A_{9,2}(\beta_1)$ and $A_{11,2}(\beta'_1)$ are not isomorphic.\\		
	Algebras $A_{10,2}(\beta_1)$ and $A_{11,2}(\beta'_1)$ are not isomorphic.\\		
	
	{\bf The fifth subset in the case $char(\mathbb{F})=3$.}\\
	
	The proof is similar to that of the case $char(\mathbb{F})\neq 2,3$, except for the following situation.\\
	{\bf Case2-2:$\alpha_2= 0$.} The corresponding system is
	
	$\alpha' _1=\frac{\xi _1}{\Delta }\left(-\beta _1 \eta _1 \xi _1+\alpha _1 \eta _2 \xi _1+2 \alpha _1 \eta _1 \xi _2\right),$
	
	$\alpha' _2=\frac{-\eta _1}{\Delta }\left(\beta _1 \eta _1 \xi _1-2 \alpha _1  \eta _2 \xi _1-\alpha _1 \eta _1 \xi _2\right),$
	
	$\alpha' _4=\frac{-\eta _1^2}{\Delta }\left(\beta _1 \eta _1-3 \alpha _1 \eta _2\right),$
	
	$\beta' _1=\frac{\xi _1^2}{\Delta }\left(\beta _1 \xi _1-3 \alpha _1 \xi _2\right).$
	
	If $\eta_1=0$, then $\alpha'_2=\alpha'_4=0$ and  $\alpha'_1=\xi_1\alpha_1, \beta' _1=\frac{\xi^2_1}{\eta_2}\beta _1.$ Therefore, one obtains $A'=\left(
	\begin{array}{cccc}
		1 & 0 & 0 & 0 \\
		0 &-1& -1 &0
	\end{array}
	\right)\simeq A_{11,3}(0)$, or $A_{12,3}=\left(
	\begin{array}{cccc}
		1 & 0 & 0 & 0 \\
		1 &-1& -1 &0
	\end{array}
	\right)\simeq A_{9,3}(0)$ or $A=\left(
	\begin{array}{cccc}
		0 & 0 & 0 & 0 \\
		1 &0& 0 &0
	\end{array}
	\right)\simeq A_{10,3}(0)$.
	
	Moreover, the following statements hold.\\ Algebras $A_{9,3}(\beta_1)$ and $A_{10,3}(\beta'_1)$ are isomorphic if and only if there exists $t\in F^*$ such that $\beta_1=\beta'_1t^3+2+1/(\beta'_1t^3)$ and $(\beta'_1)^2t^6\neq 1$. In this case $gA_{9,3}(\beta_1)(g^{-1}\otimes g^{-1})=A_{10,3}(\beta'_1)$, where  $g=\begin{pmatrix}t^2+1/(\beta'_1t) & 1/(\beta'_1t)\\
		t+1/(\beta'_1t^2)&t\end{pmatrix}$.\\
	Algebras $A_{9,3}(\beta_1)$ and $A_{11,3}(\beta'_1)$ are isomorphic  if and only if there exists $t\in \mathbb{F}\setminus \{\pm 1\}$ and $s\neq 0$ such that $\beta_1=t^3-1, s^2\beta'_1=1-t^2$. In this case $gA_{9,3}(\beta_1)(g^{-1}\otimes g^{-1})=A_{11,3}(\beta'_1)$, where  $g=\begin{pmatrix}
		(1-t)s & s\\
		t^2+t+1&t\end{pmatrix}$.\\
	Algebras $A_{10,3}(\beta_1)$ and $A_{11,3}(\beta'_1)$ are isomorphic  if and only if there exists $s,t\in F^*$ such that $\beta_1=t^3, s^2\beta'_1=-t^2$. In this case $gA_{10,3}(\beta_1)(g^{-1}\otimes g^{-1})=A_{12,3}(\beta'_1)$, where  $g=\begin{pmatrix}
		-ts & s\\
		t^2&t\end{pmatrix}$.	
	
	\textbf{Conclusion.} In Theorem 1.1 the items $A_{13}, A_{12,2}, A_{12,3}, A_{13,3}$ can be omitted and the items $A_{10}, A_{11}, A_{12}$, $A_{8,2}, A_{9,2}, A_{10,2}, A_{11,2}$, $A_{9,3},A_{10,3}, A_{11,3}$ should be replaced by as follows:
	\begin{itemize}
		\item	$A_{10}(\mathbf{c})=\left(
		\begin{array}{cccc}
			0 &1 & 1 &1 \\
			\beta_1 &0 &0 &-1
		\end{array}
		\right)\simeq \left(
		\begin{array}{cccc}
			0 &1 & 1 &1 \\
			\beta'_1(a) &0 &0 &-1
		\end{array}\right),$ where $\mathbf{c}=\beta_1$,\\ $(\beta _1a^3-3a-1)(\beta _1a^2+\beta_1a+1)(\beta_1^2a^3+6\beta_1a^2+3\beta_1a+\beta_1-2)\neq 0,\\  \beta' _1(t)=\frac{(\beta_1^2t^3+6\beta_1t^2+3\beta_1t+\beta_1-2)^2}{(\beta_1t^2+\beta_1t+1)^3}$ if $t\neq \frac{-1}{2}$,  $\beta'(\frac{-1}{2})=4-\beta_1$.\\
		\item	$\left(
		\begin{array}{cccc}
			0 &0 & 0 &1 \\
			\beta_1^2 &0 &0 &0
		\end{array}
		\right)\simeq 
		A_{11}(\mathbf{c})=\left(
		\begin{array}{cccc}
			0 &0 & 0 &1 \\
			\beta_1 &0 &0 &0
		\end{array}
		\right)\simeq \left(
		\begin{array}{cccc}
			0 &0 & 0 &1 \\
			a^3\beta_1 &0 &0 &0
		\end{array}\right),\ where\\ \ \mathbf{c}=\beta_1, a\in \mathbb{F}^*.$\\ Algebras $A_{10}(\beta_1)$ and $A_{11}(\beta'_1)$ are isomorphic if and only if there exists $t\in F^*$ such that $\beta_1=\beta'_1t^3+2+1/(\beta'_1t^3)$ and $(\beta'_1)^2t^6\neq 1$.\\
		\item	$A_{12}(\mathbf{c})=\left(
		\begin{array}{cccc}
			0 & 1 & 1 &0  \\
			\beta_1 &0& 0 &-1
		\end{array}
		\right)\simeq \left(
		\begin{array}{cccc}
			0 & 1 & 1 & 0 \\
			a^2\beta_1 &0& 0 &-1
		\end{array}
		\right),\ \mbox{where}\ \mathbf{c}=\beta_1\in \mathbb{F},\ a\in \mathbb{F}^*.$\\ Algebras $A_{10}(\beta_1)$ and $A_{12}(\beta'_1)$ are isomorphic  if and only if there exists $t\neq \pm1/2, s\neq 0$ such that $\beta_1=2(2t+1)^2(1-t), s^2\beta'_1=1-t^2$.\\ Algebras $A_{11}(\beta_1)$ and $A_{12}(\beta'_1)$ are isomorphic  if and only if there exists $s,t\in F^*$ such that $\beta_1=t^3, \beta'_1=-s^2$.\\
		
		\item	$ A_{8,2}(\mathbf{c})=\left(
		\begin{array}{cccc}
			0 &1 & 1 &1 \\
			\beta_1 &0 &0 &-1
		\end{array}
		\right)\simeq \left(
		\begin{array}{cccc}
			0 &1 & 1 &1 \\
			\beta'_1(a) &0 &0 &-1
		\end{array}\right),$\ where $\mathbf{c}=\beta_1$,\\ $(\beta _1a^3+a+1)(\beta _1a^2+\beta_1a+1)\beta_1\neq 0$,  and   $\beta' _1(t)=\beta_1^2\frac{(\beta_1t^3+t+1)^2}{(\beta_1t^2+\beta_1t+1)^3},$\\
		\item	$A_{9,2}(\mathbf{c})=\left(
		\begin{array}{cccc}
			0 &0 & 0 &1 \\
			\beta_1 &0 &0 &0
		\end{array}
		\right)\simeq \left(
		\begin{array}{cccc}
			0 &0 & 0 &1 \\
			a^3\beta_1 &0 &0 &0
		\end{array}\right),$\ where\  $\mathbf{c}=\beta_1\in \mathbb{F}$, $a\in \mathbb{F}^*$.\\
		Algebras $A_{8,2}(\beta_1)$ and $A_{9,2}(\beta'_1)$ are isomorphic if and only if there exists $t\in F^*$ such that $\beta_1=\beta'_1t^3+1/(\beta'_1t^3)$ and $(\beta'_1)^2t^6\neq 1$. 
		\item	$A_{10,2}(\mathbf{c})=\left(
		\begin{array}{cccc}
			1 & 1 & 1 & 0 \\
			\beta_1 &1& 1 &1
		\end{array}
		\right)\simeq \left(
		\begin{array}{cccc}
			1 & 1 & 1 & 0 \\
			\beta_1+a+a^2 &1& 1 &1
		\end{array}
		\right),\ \mbox{where}\ \mathbf{c}=\beta_1\in \mathbb{F},\ a\in \mathbb{F},$\\
		Algebras $A_{8,2}(\beta_1)$ and $A_{10,2}(\beta'_1)$ are isomorphic if and only if there exists $1\neq s\in F^*$ such that  $\beta_1=s^3+s$, $\beta'_1=(1+s)/s$.\\
		Algebras $A_{9,2}(\beta_1)$ and $A_{10,2}(\beta'_1)$ are isomorphic if and only if there exists $0\neq s,t\in F$ such that $\beta_1=s^3$, $t^2+st+(\beta'_1+1)s^2=0$. \item	$A_{11,2}(\mathbf{c})=\left(
		\begin{array}{cccc}
			0 & 1 & 1 & 0 \\
			\beta_1 &0& 0 &1
		\end{array}
		\right)\simeq \left(
		\begin{array}{cccc}
			0 & 1 & 1 & 0 \\
			b^2(\beta_1+a^2) &0& 0 &1
		\end{array}
		\right),$ where $\mathbf{c}=\beta_1$, $0\neq b$, $a,b\in \mathbb{F}$.\\
		Algebras $A_{8,2}(\beta_1)$ and $A_{11,2}(\beta'_1)$ are isomorphic if and only if  $\beta_1=0=\beta'_1$.\\		
		
		\item $A_{9,3}(\mathbf{c})=\left(
		\begin{array}{cccc}
			0 &1 & 1 &1 \\
			\beta_1 &0 &0 &-1
		\end{array}
		\right)\simeq \left(
		\begin{array}{cccc}
			0 &1 & 1 &1 \\
			\beta'_1(a) &0 &0 &-1
		\end{array}\right),$ where $\mathbf{c}=\beta_1$, $a\in \mathbb{F}$,\\ $(\beta _1a^3-1)(\beta _1a^2+\beta_1a+1)(\beta_1^2a^3+\beta_1+1)\neq 0$,  $\beta' _1(t)=\frac{(\beta_1^2t^3+\beta_1+1)^2}{(\beta_1t^2+\beta_1t+1)^3}$ if $t\neq 1$,  $\beta'(1)=2\beta_1+1$.
		
		\item $\left(
		\begin{array}{cccc}
			0 &0 & 0 &1 \\
			\beta_1^2 &0 &0 &0
		\end{array}
		\right)\simeq A_{10,3}(\mathbf{c})=\left(
		\begin{array}{cccc}
			0 &0 & 0 &1 \\
			\beta_1 &0 &0 &0
		\end{array}
		\right)\simeq \left(
		\begin{array}{cccc}
			0 &0 & 0 &1 \\
			a^3\beta_1 &0 &0 &0
		\end{array}\right),$\ where  $\mathbf{c}=\beta_1$, $a\in \mathbb{F}^*$.\\
		Algebras $A_{9,3}(\beta_1)$ and $A_{10,3}(\beta'_1)$ are isomorphic if and only if there exists $t\in F^*$ such that $\beta_1=\beta'_1t^3+2+1/(\beta'_1t^3)$ and $(\beta'_1)^2t^6\neq 1$. \\
		\item $A_{11,3}(\mathbf{c})=\left(
		\begin{array}{cccc}
			0 &1 & 1 &0 \\
			\beta_1 &0 &0 &-1
		\end{array}
		\right)\simeq \left(
		\begin{array}{cccc}
			0 &1 & 1 &0 \\
			a^2\beta_1 &0 &0 &-1
		\end{array}\right),$\ where  $\mathbf{c}=\beta_1$, $a\in \mathbb{F}^*$.\\
		Algebras $A_{9,3}(\beta_1)$ and $A_{11,3}(\beta'_1)$ are isomorphic  if and only if there exists $\pm 1\neq t\in F, s\neq 0$ such that $\beta_1=t^3-1, s^2\beta'_1=1-t^2$. \\
		Algebras $A_{10,3}(\beta_1)$ and $A_{11,3}(\beta'_1)$ are isomorphic  if and only if there exists $s,t\in F^*$ such that $\beta_1=t^3, \beta'_1=-s^2$.
	\end{itemize}

	\section{On the Number of Non-Isomorphic Two-Dimensional Algebras over a Finite Field}
	
In this section, we present a more moderate result concerning the number of non-isomorphic two-dimensional algebras. More precisely, we compute the number of non-isomorphic two-dimensional algebras for which at least one of the traces is non-zero.

We denote by $\vert A_i\vert$ the number of non-isomorphic algebras in the class $A_i$.
	
	\begin{theorem} The number of non-isomorphic two-dimensional algebras over $\mathbb{F}=\mathbb{F}_q$ for which at least one trace is non-zero is given as follows:
		\begin{itemize}	
			\item Case $char(\mathbb{F})\neq 2,3$:\ \	
			$q^4+q^3+4q^2+4q+1.$
			\item Case $char(\mathbb{F})= 3$:\ \
			$q^4+q^3+4q^2+4q.$	
			\item Case $char(\mathbb{F})= 2$:\ \	
			$q^4+q^3+4q^2+3q.$
			
		\end{itemize}
	\end{theorem}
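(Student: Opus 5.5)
Since the classification in Theorem 1.1 lists the algebras with at least one non-zero trace exactly in the first four invariant subsets, I will count, for each characteristic, the isomorphism classes in each listed family outside the fifth subset, and add the numbers. For $char(\mathbb{F})\neq 2,3$ these families are $A_1,\dots,A_9$; for $char(\mathbb{F})=2$ they are $A_{1,2},\dots,A_{7,2}$; for $char(\mathbb{F})=3$ they are $A_{1,3},\dots,A_{8,3}$ together with $A_{12,3}$. The Conclusion of Section 2 shows that $A_{12,3}\simeq A_{9,3}(0)$ lies in the fifth subset, so it is dropped. The key point is that every family is parametrized either freely, where distinct parameters give non-isomorphic algebras, or modulo an explicit equivalence stated in Theorem 1.1. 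Each family therefore reduces to an elementary orbit count over $\mathbb{F}_q$.

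\textbf{Case $char(\mathbb{F})\neq 2,3$.}
\begin{itemize}
\item $|A_1|=q^4$.
\item $|A_2|=q^2(q-1)$, since $\alpha_4\neq0$.
\item For $A_3$, the parameter $\alpha_4$ runs over $\mathbb{F}_q$ modulo squares, giving the three classes $0$, squares and non-squares. Hence $|A_3|=3q^2$.
\item $|A_4|=q^2$.
\item $|A_5|=q$.
\item $|A_6|=q(q-1)$.
\item $|A_7|=3q$.
\item $|A_8|=q$.
\item $|A_9|=1$.
\end{itemize}
The sum is
\[
q^4+q^3-q^2+3q^2+q^2+q+q^2-q+3q+q+1=q^4+q^3+4q^2+4q+1,
\]
as claimed.

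\textbf{Case $char(\mathbb{F})=3$.} The list is identical except that the constant family $A_9$ is absent. We get $q^4+q^3+4q^2+4q$.

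\textbf{Case $char(\mathbb{F})=2$.} Every element is a square, so the $a^2\alpha_4$ equivalence leaves two classes, $\alpha_4=0$ and $\alpha_4\neq0$. The counts are as follows.
\begin{itemize}
\item $|A_{1,2}|=q^4$.
\item $|A_{2,2}|=q^2(q-1)+q$, the extra $q$ coming from the listed degenerate member $A_{2,2}(\alpha_1,0,1)$.
\item $|A_{3,2}|=2q^2$.
\item $|A_{5,2}|=q(q-1)+1$.
\item $|A_{6,2}|=2q$.
\end{itemize}
The remaining families $A_{4,2}$ and $A_{7,2}$ are quotiented by Artin--Schreier type shifts $\beta_1\mapsto\beta_1+(1+\beta_2)a+a^2$, respectively $\beta_1\mapsto\beta_1+(1+\alpha_1)a+a^2$. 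I count their orbits as follows.
\begin{itemize}
\item If the linear coefficient $c$ is non-zero, the map $a\mapsto a^2+ca$ is additive with kernel $\{0,c\}$. Its image is a subgroup of index $2$, so there are $2$ orbits.
\item If $c=0$, the map $a\mapsto a^2$ is bijective, so there is $1$ orbit.
\end{itemize}
This gives $|A_{4,2}|=q(2(q-1)+1)=2q^2-q$ and $|A_{7,2}|=2(q-1)+1=2q-1$. The total is
\[
q^4+q^3-q^2+q+2q^2+2q^2-q+q^2-q+1+2q+2q-1=q^4+q^3+4q^2+3q,
\]
as claimed.

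\textbf{Main obstacle.} The arithmetic is routine; what needs care is justifying that these counts really are numbers of isomorphism classes. This requires two checks.
\begin{itemize}
\item The equivalences displayed in Theorem 1.1 for the non-fifth subsets must be the only isomorphisms within each family. In particular, for the free families $A_1,A_2,A_4,\dots$, distinct parameters must give non-isomorphic algebras.
\item No algebra may appear in two different families.
\end{itemize}
I would take both from Theorem 1.1 ("exactly one of"), which Section 2 says is correct outside the fifth subset. To guard against slips, I would cross-check disjointness using the invariants $tr_1,tr_2$: whether they are proportional, zero, or independent, which is how the subsets were defined. I would also check the degenerate members in characteristic $2$, namely $A_{2,2}(\alpha_1,0,1)$ and $A_{5,2}(1,0)$, so that they are not double counted. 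As a consistency check, subtracting these counts from the totals in \cite{P} predicts $6$ fifth-subset classes in every characteristic, which is the figure I would expect the later counting of the corrected fifth subset to confirm.
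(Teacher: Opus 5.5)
Your proposal is correct and is essentially the paper's proof. Both take Theorem 1.1 as given outside the fifth subset, then count classes family by family and sum: free parameters count directly, $\alpha_4$ is taken modulo nonzero squares, and in characteristic $2$ the shifts $\beta_1\mapsto\beta_1+ca+a^2$ give cosets of an additive subgroup. Your characteristic-$2$ tally is in fact more complete than the paper's written one, which records $|A_{3,2}|=q^2$ and never mentions $A_{6,2}$; your values $|A_{3,2}|=2q^2$ and $|A_{6,2}|=2q$ are exactly what is needed to reach the stated total $q^4+q^3+4q^2+3q$.
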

	\begin{proof} Let $\mathbb{F}^*=\langle \sigma\rangle=\{1,\sigma,\ldots,\sigma^{q-2}\}.$
		\begin{itemize}	
			\item The parameters in the classes $A_{1}, A_{4}, A_{5}$ and $A_{8}$ are free. Therefore  $\vert A_{1}\vert=q^4, \vert A_{4}\vert=q^2, \vert A_{5}\vert=q$, $\vert A_{8}\vert=q$. In the cases $A_{2}, A_{6}$ the only constraint is $\alpha_4\neq 0$, therefore $\vert A_{2}\vert=q^2(q-1), \vert A_{6}\vert=q(q-1)$. In the class $A_3$, the parameter $a^2\alpha_4$ can be reduced to $0$, $1$, or $\sigma$,
			while $\alpha_1$ and $\beta_2$ remain free parameters. Therefore
			$\vert A_{3}\vert=3q^2.$ Similarly, $\vert A_{7}\vert=3q$, $\vert A_{9}\vert=1$, yielding a total of $q^4+q^3+4q^2+4q+1$.
			
			\item $\vert A_{1,3}\vert=q^4, \vert A_{2,3}\vert=q^2(q-1), \vert A_{3,3}\vert=3q^2$, $\vert A_{4,3}\vert=q^2$, $\vert A_{5,3}\vert=q, \vert A_{6,3}\vert=q(q-1), \vert A_{7,3}\vert=3q$, $\vert A_{8,3}\vert=q$ with total number of $q^4+q^3+4q^2+4q.$
			
			\item Similarly, because of the free variables, the equalities $\vert A_{1,2}\vert=q^4, \vert A_{2,2}(*,0,1)\vert=q$ are valid. In the cases with the constraint $\alpha_4\neq 0$, we have $\vert A_{2,2}\vert=q^2(q-1)$, $\vert A_{3,2}\vert=q^2$ and $\vert A_{5,2}\vert=q(q-1)+1$ taking into account $A_{5,2}(1,0)$. 		
			
			Let $r_a=\vert R_a\vert$, where $R_a$ denotes the range of the map $x\rightarrow x^2+ax$ from $\mathbb{F}$ to itself. It is clear that $R_a$ is closed under addition, $r_0=q$ and $r_a= q/2$ if $a\neq 0$. If $b\notin R_a$, then $R_a$ and $b+R_a$ are disjoint. Therefore, in $A_{4,2}$ case, if $\beta_2=1$, there are $q$ non-isomorphic algebras, for each $\beta_2\neq 1$ there are $2q$ non-isomorphic algebras, so the total number is  $\vert A_{4,2}\vert=q+2(q-1)q=2q^2-q$. \
			The situation for $A_{7,2}$ is similar that of for $A_{4,2}$, so $\vert A_{7,2}\vert=1+2(q-1)=2q-1$. The total number is $q^4+q^3+4q^2+3q$.
			
		\end{itemize}
	\end{proof}

	\begin{remark} Note that the rational function$
		f(a,t)=\frac{(a^2t^3+6at^2+3at+a-2)^2}{(at^2+at+1)^3}.
		$
		satisfies the following property
		$
		f(f(a,s),t)=f(a,f(s,t))
		$
		and computing the number of non-isomorphic two-dimensional algebras with both traces equal to zero requires additional investigation.
	\end{remark}

\end{document}